\newcommand{\RR}{\mathbb{R}}
\newcommand{\QQ}{\mathbb{Q}}
\newcommand{\ZZ}{\mathbb{Z}}
\newtheorem{thm}{Theorem}[section]
\newtheorem{lemma}[thm]{Lemma}
\newtheorem{prop}[thm]{Proposition}
\newtheorem{rmk}{Remark}
\newtheorem*{mainthm}{Main Theorem}
\numberwithin{equation}{section} 
\numberwithin{figure}{section} 
\numberwithin{table}{section} 
\title{The Distribution of Integers in a Totally Real Cubic Field}
\author{Tianyi Mao} 
\address{Department of Mathematics, CUNY Graduate Center}
\email{tmao@gradcenter.cuny.edu}
\date{\normalsize\today} 
\begin{document}

\maketitle 


\begin{abstract}
Hecke studies the distribution of fractional parts of quadratic irrationals with Fourier expansion of Dirichlet series. This method is generalized by Behnke and Ash-Friedberg, to study the distribution of the number of totally positive integers of given trace in a general totally real number field of any degree. When the field is cubic, we show that the asymptotic behavior of a weighted Diophantine sum is related to the structure of the unit group. The main term can be expressed in terms of Gr{\"o}ssencharacter $L$-functions.
\end{abstract}

\section{Introduction}
The study of the equidistribution of the fractional part of $m\alpha$ for $\alpha$ irrational and $m=1,2,\ldots$ running over the rational integers, dates back to Weyl's work \cite{weyl1910} in 1910. Hecke \cite{hecke1922} studied the case when $\alpha$ is a fixed real quadratic irrational. His key idea is using the Fourier expansion of the Dirichlet series 
\begin{align*}
\sum_{m\ge 1} \left(\{m\alpha\}-\frac{1}{2}\right)m^{-s}
\end{align*}
to estimate the Diophantine sum
\begin{align}\label{s1n}
S_1(n)=\sum_{m=1}^n \left(\{m\alpha\}-\frac{1}{2}\right).
\end{align}

Both Behnke \cite{behnke1923} and Ash-Friedberg \cite{ash2007} aim at generalizing Hecke's result to an arbitrary totally real field $K$ of degree $n$. In such cases, the generalization of the fractional part of $m\alpha$ is the error term in the natural geometric estimate for the number of totally positive integers of $K$ of a given trace. They form the Dirichlet series $\varphi(s)$ whose coefficients are these errors. More specifically, let $O_K$ be the ring of integers of $K$, and let $\text{Tr}(O_K)$ be generated by $\kappa>0$. For positive multiples $a$ of $\kappa$, let $N_a$ denote the number of totally positive integers with trace $a$. There is the natural geometric estimate $r_a$ of $N_a$ derived from the volume of the intersection in $O_K\otimes \RR$ of the cone of totally positive elements with the hyperplane defined by $\text{Tr}\alpha=a$. Denote the difference between the true value and the estimate by $E_a=N_a-r_a$. If $a$ is not a multiple of $\kappa$, we set $E_a=0$. Then we define the Dirichlet series
\[
\varphi(s)=\sum_{a>0} \frac{E_a}{a^s}.
\]

In \cite{behnke1923} and \cite{ash2007}, using the hyperbolic Fourier expansion, $\varphi(s)$ is expressed in terms of an infinite sum involving the Riemann zeta function, Hecke $L$-functions and some Gamma factors. They deduce from this a meromorphic continuation of $\varphi(s)$ in the right half plane $\Re(s)>0$. Each summand is meromorphic on the whole complex plane. However, if $n\ge 3$, the sum will have a dense set of poles on the line $\Re(s)=0$ coming from the Gamma factors, which prevents further analytic continuation. This is an essential difference with Hecke's case $n=2$.

We wish to extract some information on the distribution of the errors $E_a$ from $\varphi(s)$. In \cite{behnke1923} and \cite{ash2007}, the main theorems make conclusions on the asymptotic behaviour of the sum $\sum_{a\le X}E_a$. One way to study the sum is using the Mellin transform and Perron's formula. In other words, we work with the integral
\[
\frac{1}{2\pi i}\int_{n+2- i\infty}^{n+2+ i\infty} \varphi(s) X^s F(s)ds,
\]
for a suitable test function $F(s)$. In the case of Perron's formula, $F(s)=1/s$ is the Mellin transform of the characteristic function of $[0,1]$. We note that if we make $F(s)$ decay fast enough as $\Im(s)\rightarrow \infty$, the integral above will converge absolutely. Then we can move the line of integration and make claims about the corresponding weighted sum $\sum E_a f(a/X)$ where $f(t)$ is the Mellin inversion of $F(s)$.

In particular, for $k\ge 3$, let $f(t)=(-\log(t))^k$ on $[0,1]$, and $f(t)=0$ elsewhere. Then $F(t)=(k!)s^{-k-1}$. For the integral above, we try to move the line of integration through the "dense pole line". We start by writing the integrand as a sum of complex functions that are meromorphic on the whole complex plane. Then we interchange the order of summation and integration, and move the line of integration in each summand. Each summand will contribute a residue term. We use the Stirling's formula and convexity bound to estimate each residue. Their sum is then bounded by a summation over the lattice generated by the inverse of the regulator matrix of $K$, up to some constant. Thus the coordinates of the lattice is some linear combination of logarithms of algebraic numbers. We use Baker's theorem on the poor approximability of such numbers to argue that the sum of the residues is convergent. This require us to limit ourselves to the case when $K$ is a cubic field. The main term comes from the multiple pole at $s=0$. The exact order of that pole depends on the structure of the unit group of the field $K$.

Our main result is:
\begin{mainthm}
Let $K$ be a totally real cubic field. Let $E_a$ be defined as above. For $k\ge 3$,
\begin{align*}
\sum_{n\le X} E_n \log^k\left(\frac{X}{n}\right)&=\frac{3\sqrt{D}}{8\pi^2 (k+1)R} \left(\sum_{(0,v\ne v_0)\text{ good}} L(1,v)\right) (\log X)^{k+1}\\
&+O\left((\log X)^{k}\right).
\end{align*}
(See the following section for the definition of the Hecke $L$-functions and what it means for the pair $(0,v)$ to be good.)
\end{mainthm}

Another direction of generalization of the idea of Hecke is to estimate the average of the partial sum $S_1(n)$ in \eqref{s1n}. For a quadratic irrational $\alpha$, define
\[
M_1(X)=\frac{1}{X}\sum_{n=1}^X S_1(n).
\]
Beck \cite[Prop 6.1]{beck2010} proves that $M_1(X)=c\log X+o(\log X)$, where the constant $c$ depends on the special value of a quadratic Dirichlet $L$-function. He uses the theory of continued fractions. Note that Hardy-Littlewood \cite{hardy1924} also uses continued fractions to give another proof of Hecke's result. It is hard to apply these techniques in our scenario since much less is known about the continued fraction expansions of irrationalities of degree higher than $2$. Other generalizations of Hecke's idea can be found in Duke-Imamoglu \cite{duke2004}, in the case of certain cones; and Zhuravlev \cite{zhuravlev2013}, in case of higher dimensional irrational lattices. We also mention the recent work of B.Borda \cite{Borda2016}. This provides another approach to counting lattice points in more general irrational polytopes.  If the ideas of the present paper can be extended to higher degree number fields, it will be of interest to compare the results to those of \cite{Borda2016}.

The remainder of this paper is organized as follows. In Section 2 we quote the construction of $\varphi(s)$ in \cite{ash2007}. We basically follow their notations and techniques, with some variations to claim that the expression of $\varphi(s)$ is valid in the vertical strip $-n<\Re(s)<0$, and to estimate the growth rate in that region. In Section 3, we express the weighted sum of $E_a$ using the Mellin transform,  and then move the line of integration to obtain the residues plus an error term. Using Baker's theorem, the sum of the residues other than $s=0$ are proved to be of lower order. The residue at $s=0$ contributes a main term, whose coefficient is expressed in terms of Hecke Gr{\"o}ssencharkter $L$-function. The main theorem follows. In Section 4 we give a numerical example for the field of discriminant $257$.

\subsection*{Acknowledgements}
The author acknowledges support from NSF DMS grant 0847586. The computations described in Section 4 were done with the help of the City University of New York High Performance Computing Center at the College of Staten Island, which is supported by National Science Foundation Grants CNS-0958379, CNS-0855217, ACI-1126113.

\section{Dirichlet Series Constructed from Totally Real Number Fields}

Let $K$ be a totally real number field of degree $n$. Ash and Friedberg \cite{ash2007} consider the problem of counting the number of totally positive integers with given trace in $K$. Their method is to consider a family of Dirichlet series constructed from $K$, whose hyperbolic Fourier coefficients turn out to be related to Hecke $L$-functions. In this section, we will follow their notations and state their results without proofs.

Let $\sigma_k :K\rightarrow \RR$, $1\le k\le n$ be the real embeddings of $K$. For $\alpha\in K$, let $\alpha^{(k)}=\sigma_k(\alpha)$. Let $O_K$ denote the integers of $K$, and let $U$ be a subgroup of the units of $O_K$ generated by all totally positive units and $-1$. Let $u_l$, $1\le l\le n-1$, be units which together with $\{\pm 1\}$ generate $U$. Let $V=\RR^{n-1}$, and let $\Lambda_U$ be the lattice in $V$ spanned by the vectors
\[
\lambda_l=\left(\log|u_l^{(1)}|-\log|u_l^{(n)}|,\log|u_l^{(2)}|-\log|u_l^{(n)}|,\ldots,\log|u_l^{(n-1)}|-\log|u_l^{(n)}| \right).
\]
Let $\Lambda_U^*$ be the dual lattice to $\Lambda_U$ in $V$ with respect to the standard inner product.

For each $i$, $1\le i\le n$, choose $e_i=0$ or $1$, and let $v(\alpha)=\prod_{i=1}^n \text{sgn}(\alpha^{(i)})^{e_i}$. Let $v_0$ correspond to $e_i=0$ for all $i$. We call a pair $(\mu,v)$ good if
\[
\prod_{j=1}^{n-1}\left|\frac{u^{(n)}}{u^{(j)}}\right|^{-2\pi i\mu_j} v(u)=1
\]
for all $u\in O_K^\times$. For such a pair, let
\[
\lambda_{\mu,v}(\alpha)=\prod_{j=1}^{n-1}\left|\frac{\alpha^{(n)}}{\alpha^{(j)}}\right|^{-2\pi i\mu_j} v(\alpha).
\]
Then $\lambda_{\mu,v}$ can be extended to a Hecke character. Define the partial Hecke $L$-function
\[
L(s,\lambda_{\mu,v})=\sum_{(\beta)\ne (0)}\lambda_{\mu,v}((\beta))N(\beta)^{-s}
\]
where the sum is over all nonzero principal ideal of $O_K$.

Define 
\begin{align*}
\Psi(s,v)=\sum_{0\ne \alpha\in O_K} \frac{v(\alpha)}{(|\alpha^{(1)}|+\ldots+|\alpha^{(n)}|)^s}.
\end{align*}
This sum converges for $\Re(s)>n$. We have the following theorem:
\begin{prop}\label{prop21}
Each function $\Psi(s,v)$ has meromorphic continuation to the right half plane $\Re(s)>0$. Explicitly, we have
\begin{align}\label{defpsi}
\Psi(s,v)=\sum_{\mu\in \Lambda_U^*} a_\mu(s,v)
\end{align}
where $a_\mu(s,v)=0$ unless $(\mu,v)$ is good. In that case we have
\[
a_\mu(s,v)=\frac{2}{nR}\frac{1}{\Gamma(s)} L\left(\frac{s}{n},\lambda_{\mu,v}\right)\prod_{j=1}^n \Gamma\left(\frac{s}{n}-2\pi i\mu_j\right).
\]
Furthermore, the functions $\Psi(s,v)$ are holomorphic in this right half plane for $v\ne v_0$, while $\Psi(s,v_0)$ has a simple pole at $s=n$ of residue
\[
\frac{2^n}{(n-1)!\sqrt{D}}
\]
where $D$ is the discriminant of $K$.
\end{prop}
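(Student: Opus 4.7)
The strategy is the hyperbolic Fourier expansion used by Hecke and Ash-Friedberg. Starting in the half-plane of absolute convergence $\Re(s)>n$, we first partition $O_K \setminus \{0\}$ into $U$-orbits: each nonzero $\alpha$ is written uniquely as $\beta\epsilon$ with $\beta$ a chosen generator of $(\alpha)$ and $\epsilon \in U$, and we decompose $U = \{\pm 1\} \cdot U_+$ where $U_+$ is the free abelian group generated by $u_1, \ldots, u_{n-1}$. The series becomes
\[
\Psi(s,v) = (1 + v(-1)) \sum_{(\beta) \ne (0)} v(\beta) \sum_{u \in U_+} \bigl(|\beta^{(1)}||u^{(1)}| + \cdots + |\beta^{(n)}||u^{(n)}|\bigr)^{-s}.
\]
The factor $1 + v(-1)$ vanishes when $\sum_i e_i$ is odd, consistent with the fact that $v(-1) = -1$ would force every pair $(\mu, v)$ to fail the goodness condition. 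Using the coordinates $x_j(u) = \log|u^{(j)}| - \log|u^{(n)}|$, the map $u \mapsto x(u)$ identifies $U_+$ with the lattice $\Lambda_U \subset V$, so the inner sum becomes $\sum_{x \in \Lambda_U} G_\beta(x)$ for an explicit function $G_\beta$ on $V$.

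Next we apply Poisson summation, $\sum_{x \in \Lambda_U} G_\beta(x) = R^{-1} \sum_{\mu \in \Lambda_U^*} \widehat{G_\beta}(\mu)$, with $R = \mathrm{vol}(V/\Lambda_U)$ the regulator. To evaluate $\widehat{G_\beta}(\mu)$ we insert the Mellin identity $y^{-s} = \Gamma(s)^{-1}\int_0^\infty t^{s-1} e^{-ty}\,dt$ applied to $y = \sum_i |\beta^{(i)}||u^{(i)}|$, interchange orders, and change variables on $V \times (0,\infty)$ so that the exponential factors as a product of $n$ one-dimensional exponentials in the coordinates $\log|\alpha^{(i)}|$. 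Each factor then contributes $\Gamma(s/n - 2\pi i \mu_j)$, with an overall $1/n$ arising from the Jacobian. Summing over principal ideals, the $v$-weighted sum $\sum_{(\beta)} v(\beta) \widehat{G_\beta}(\mu)$ vanishes by character orthogonality on units unless $(\mu,v)$ is good; when good, $\lambda_{\mu,v}$ descends to a well-defined character on principal ideals and the sum reassembles into $L(s/n, \lambda_{\mu,v})$. Gathering the prefactors $1 + v(-1) = 2$, $1/R$, $1/n$, and $1/\Gamma(s)$ produces the stated formula for $a_\mu(s,v)$.

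The main obstacle is promoting these formal identities to an analytic statement: that $\sum_{\mu \in \Lambda_U^*} a_\mu(s,v)$ converges to a meromorphic continuation of $\Psi(s,v)$ on $\Re(s) > 0$ and that the interchanges above are justified. We would apply Stirling's asymptotics to each $\Gamma(s/n - 2\pi i \mu_j)$, which yield exponential decay of order $e^{-\pi|\mu_j|}$ uniformly on compact subsets of $\Re(s) > 0$, together with the standard convexity bound on the partial Hecke $L$-function, whose growth in $\mu$ is only polynomial on vertical strips. These bounds force $\sum_\mu a_\mu(s,v)$ to converge locally uniformly on $\Re(s) > 0$ and legitimize all interchanges.

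Finally, on $\Re(s) > 0$ the Gamma factors $\Gamma(s/n - 2\pi i \mu_j)$ are all holomorphic, so any poles must come from $L(s/n, \lambda_{\mu,v})$. This $L$-function is entire unless $\lambda_{\mu,v}$ is the trivial Hecke character, which forces $\mu = 0$ and $v = v_0$. In that case $L(s/n, \lambda_{0,v_0}) = \sum_{(\beta) \ne (0)} N(\beta)^{-s/n}$ inherits from the class-number formula a simple pole at $s = n$ of residue $n \cdot 2^{n-1} R / \sqrt{D}$. Combining with $\Gamma(1)^n / \Gamma(n) = 1/(n-1)!$ and the prefactor $2/(nR)$ yields the claimed residue $2^n / ((n-1)! \sqrt{D})$.
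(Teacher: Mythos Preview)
Your proposal is correct and follows exactly the hyperbolic Fourier expansion of Ash--Friedberg that the paper invokes; indeed, the paper's own proof of this proposition is simply a citation to \cite{ash2007}, Prop.~4.1, and your sketch reproduces that argument (orbit decomposition under units, Poisson summation over $\Lambda_U$, Mellin representation to compute the Fourier transform as a product of shifted Gamma functions, and the residue from the principal partial zeta function). One small point to tidy in a full write-up: since $U$ may be a proper subgroup of $O_K^\times$, the passage from $U$-orbits to principal ideals and the goodness condition stated ``for all $u\in O_K^\times$'' need to be reconciled, but this is a bookkeeping detail already handled in \cite{ash2007}.
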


\begin{proof}
See \cite{ash2007}, Prop 4.1.
\end{proof}

Remark: The proof in \cite{ash2007} actually shows that the sum in \eqref{defpsi} is well-defined in any bounded vertical strip that contains no poles from the gamma factors in $a_\mu(s,v)$. In Section $8$ of \cite{ash2007} , this is proved quantitatively by studying the growth rate of $\Psi(s,v)$ in the strip $0<\Re(s)<n$, as $\Im(s)\rightarrow\infty$. We will imitate this process later.

Let $N_a$ denote the number of totally positive integers with trace $a$. One can obtain an approximation of $N_a$ by a geometric estimate, called $r_a$, given by
\[
r_a=\frac{ka^{n-1}}{(n-1)!\sqrt{D}},
\]
where $\kappa>0$ is the generator of $\text{Tr}(O_K)$. (See \cite{ash2007}, Prop 5.1.)

Ash and Friedberg study the Dirichlet series
\begin{align*}
\varphi(s)=\sum_{a>0}\frac{E_a}{a^s}=\sum_{a>0}\frac{N_a-r_a}{a^s}.
\end{align*}
The second part
\[
Z(s)=\sum r_a a^{-s}=\frac{\kappa^{n-s}}{(n-1)!\sqrt{D}}\zeta(s-n+1)
\]
is just a translation of Riemann zeta function. The first part can be written as
\begin{align*}
\sum_{a>0}\frac{N_a}{a^s}=2^{-n}\sum_v \Psi(s,v),
\end{align*}
where $v$ is over all possible combinations $v(\alpha)=\prod_{i=1}^n \text{sgn}(\alpha^{(i)})^{e_i}$, each $e_i=0$ or $1$. We rewrite
\begin{align}
\varphi(s)=2^{-n}\sum_v \Psi(s,v)-Z(s)\label{phidef}
\end{align}
for further convenience.

When $n=2$, $\varphi(s)$ can be meromorphically continued to the whole complex plane. However, when $n\ge 3$, this cannot be done because we have a dense set of poles on the line $\Re(s)=0$, coming from the Gamma factors in $\Psi(s,v)$. However, in the strip $-n<\Re(s)<0$, we can define
\begin{align*}
\Psi_0 (s,v)&=\sum_{\mu\in \Lambda_U^*} a_\mu(s,v)\\
\varphi_0(s)&=2^{-n}\sum_v \Psi_0 (s,v)-Z(s).
\end{align*}
We have
\begin{prop}\label{prop22}
The functions $\Psi_0 (s,v)$ and $\varphi_0(s)$ are holomorphic functions in the strip $-n<\Re(s)<0$. Given $\epsilon,\delta>0$, we have
\[
\varphi_0 (s)=O(|t|^{n-\frac{1}{2}-\sigma+\epsilon})
\]
as $t\rightarrow \infty$ uniformly for $s=\sigma+it$ with $-n+\delta\le \sigma\le -\delta$, with the implied constant depending on $\epsilon$ and $K$.
\end{prop}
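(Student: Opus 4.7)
\textit{Proof plan for Proposition \ref{prop22}.} The strategy is to imitate the growth-rate argument of \cite{ash2007} Section 8 on the positive strip $0<\Re(s)<n$, pushed across $\Re(s)=0$ by invoking the functional equation for the partial Hecke $L$-function $L(s/n,\lambda_{\mu,v})$. The proof will have three ingredients: (i) term-by-term holomorphicity of the summands $a_\mu(s,v)$, (ii) a pointwise bound on each $a_\mu(s,v)$ from Stirling and the functional equation, and (iii) summation of that bound over the dual lattice $\Lambda_U^*$.

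I would begin by checking that each $a_\mu(s,v)$ is holomorphic on the open strip $-n<\Re(s)<0$. The factor $\Gamma(s/n-2\pi i\mu_j)$ has poles only at $s=-nk+2\pi i n\mu_j$ with $k\ge 0$, so $\Re(s)\in\{0,-n,-2n,\ldots\}$, none of which lies in the interior of the strip; $1/\Gamma(s)$ is entire; and $L(s/n,\lambda_{\mu,v})$ is entire unless the character is trivial, in which case the only pole is at $s=n$, again outside the strip. Hence each $a_\mu$ is holomorphic in the strip, and the question reduces to uniform convergence of $\sum_\mu a_\mu(s,v)$ on compact subsets.

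Next I would bound $|a_\mu(s,v)|$ pointwise for $s=\sigma+it$ with $-n+\delta\le\sigma\le-\delta$. Since $\Re(s/n)\in(-1,0)$ lies to the left of the critical strip, I would apply the Hecke functional equation to rewrite
\[
L(s/n,\lambda_{\mu,v})=\varepsilon(s,\mu,v)\,L(1-s/n,\bar\lambda_{\mu,v}),
\]
where $\varepsilon(s,\mu,v)$ is the standard gamma-and-conductor factor. The reflected $L$-value is $O(1)$ uniformly since $\Re(1-s/n)>1$. Applying Stirling to the gamma ratio in $\varepsilon$, to the numerator $\prod_{j=1}^n \Gamma(s/n-2\pi i\mu_j)$, and to $1/\Gamma(s)$, the $e^{\pi|t|/2}$ growth of $1/|\Gamma(s)|$ is absorbed by the $e^{-\pi|t|/2}$ decay coming from the $n$ gamma factors in the numerator together with those in $\varepsilon$, leaving behind a power of $|t|$ and an exponential factor of the form $\exp\bigl(-\tfrac{\pi}{2}\sum_j|t/n-2\pi\mu_j|+\tfrac{\pi|t|}{2}\bigr)$ that decays once $\mu$ moves away from the ``resonance region'' $2\pi\mu_j\approx t/n$.

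Finally I would sum over $\mu\in\Lambda_U^*$, splitting the lattice into the finitely many points near the resonance region (where one collects a polynomial-in-$|t|$ bound) and the remaining tail (where exponential decay in $|\mu|$ wins against the polynomial conductor growth from $\varepsilon$). Tracking exponents in $|t|$ through Stirling and the functional equation yields the stated bound $O(|t|^{n-1/2-\sigma+\epsilon})$, with the $\epsilon$ absorbing subconvexity slack and the polynomial $|\mu|$-dependence after summation. Uniform convergence on compacta of the strip then gives holomorphicity of $\Psi_0(s,v)$, and the bound for $\varphi_0(s)$ follows since $Z(s)=\kappa^{n-s}\zeta(s-n+1)/((n-1)!\sqrt{D})$ is holomorphic in the strip and grows polynomially with a smaller exponent than the claimed bound. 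The main obstacle will be step (iii): controlling the sum over the $(n-1)$-dimensional lattice $\Lambda_U^*$ uniformly in $t$, particularly handling the interaction between the shifting resonance region (which moves with $t$) and the polynomial conductor growth from the functional equation, so that the exponential decay in the tail survives after all polynomial factors are accounted for.
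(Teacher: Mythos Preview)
Your approach is essentially the paper's: functional equation plus Stirling to bound each $a_\mu(s,v)$, then sum over $\Lambda_U^*$. Two points to correct. First, in step (iii) you say there are ``finitely many points near the resonance region'' --- in fact the number of such lattice points grows like $|t|^{n-1}$, and this factor is exactly what produces the $n-1$ in the final exponent $n-\tfrac12-\sigma$. The constraint $\sum_j\mu_j=0$ forces $\sum_j|t/n-2\pi\mu_j|\ge|t|$, so the exponential factor is at best $e^{-\pi|t|/2}$, cancelling the $e^{\pi|t|/2}$ from $1/\Gamma(s)$; the contributing region is then a slab of thickness $O(1)$ around the hyperplane $\sum_j(t/n-2\pi\mu_j)=t$ intersected with a box of side $O(|t|)$, hence $O(|t|^{n-1})$ lattice points. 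The paper packages this lattice-sum estimate by quoting a lemma from \cite{ash2007}, Prop.~8.1, which gives $\sum_{\eta\in\Lambda}\prod_j|T-\eta_j|^\beta e^{-|T-\eta_j|}\ll e^{-nT}T^{n-1+n\beta}$. Second, your claim that $Z(s)$ ``grows polynomially with a smaller exponent'' is off: by the functional equation of $\zeta$, one has $\zeta(s-n+1)\ll|t|^{1/2-(\sigma-n+1)+\epsilon}=|t|^{n-1/2-\sigma+\epsilon}$, i.e.\ the \emph{same} exponent as the claimed bound, which is fine for the proposition but not strictly smaller.
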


\begin{proof}
We use the same argument as in \cite{ash2007}, Prop 8.1. 

An estimate of this form for $\zeta(s-n+1)$ follows immediately from the functional equation.

For $s=\sigma+it$, we have
\[
L\left(\frac{s}{n},\lambda_{\mu,v}\right)\ll \prod_{j=1}^n \left|\frac{t}{n}-2\pi \mu_j \right|^{1/2-\sigma/n+\epsilon}
\]
as $|t|\rightarrow \infty$ in the strip $-n<\Re(s)<0$, simply by the functional equation. Combining with Stirling's formula, one sees that
\[
a_\mu(s,v)\ll |t|^{1/2-\sigma} e^{\pi |t|/2} \prod_{j=1}^n \left|\frac{t}{n}-2\pi \mu_j \right|^{\epsilon} e^{-\frac{\pi}{2}\left|\frac{t}{n}-2\pi \mu_j \right|}.
\]
Let
\[
F(s,\mu)=\prod_{j=1}^n \left|\frac{t}{n}-2\pi \mu_j \right|^{\epsilon} e^{-\frac{\pi}{2}\left|\frac{t}{n}-2\pi \mu_j \right|}.
\]
Then
\[
\sum_{\mu\in \Lambda_U^*} |a_\mu(s,v)| \ll |t|^{1/2-\sigma} e^{\pi |t|/2}\sum_\mu F(s,\mu).
\]

To complete the proof, we use the following lemma from \cite{ash2007}:
\begin{lemma}[\cite{ash2007}, Prop 8.1]
Let
\[
F_1(T,\eta)=\prod_{j=1}^n |T-\eta_j|^\beta e^{-|T-\eta_j|},
\]
where $\beta>0$, $T>0$, and $\eta=(\eta_1,\ldots,\eta_n)$. Suppose $\Lambda$ is a lattice in the hyperplane $H=\{x|x_1+\ldots+x_n=0\}\subset \RR^n$, then the sum
\[
\sum_{\eta\in\Lambda} F_1(T,\eta)\ll e^{-nT} T^{n-1+n\beta}.
\]
The implicit constant depending on $\beta$ and $K$.
\end{lemma}

Now we let $T=\frac{\pi t}{2n}$, $\beta=\epsilon$, $\eta_j=\pi^2 \mu_j$, and we will end up with
\[
\Psi_0 (s,v)\ll \sum_\mu |a_\mu(s,v)|\ll |t|^{n-\frac{1}{2}-\sigma+n\epsilon}.
\]
\end{proof}

\section{Working with the Cubic Field}

From now on we assume $K$ is a totally real cubic field. We want to further study the behaviour of $E_a$. Recall that the corresponding Dirichlet series for $E_a$ is
\begin{align*}
\varphi(s)=\sum_{a>0}\frac{E_a}{a^s}=2^{-n}\sum_v \Psi(s,v)-Z(s)
\end{align*}
as in \eqref{phidef}.

A usual way to study an arithmetic quantity from its Dirichlet series is using the Mellin transform. For an integer $k\ge 3$, let $f(t)=(-\log(t))^k$ on $[0,1]$, and $f(t)=0$ elsewhere. Let $F(t)=(k!)s^{-k-1}$ be its Mellin transform. To the right of the abscissa of absolute convergence of $\varphi(s)$, we have
\begin{align*}
S(X)=\sum_{n\ge 1} E_n f\left(\frac{n}{X}\right)&=\frac{1}{2\pi i}\int_{5- i\infty}^{5+ i\infty} \varphi(s) X^s F(s)ds.
\end{align*}

The usual technique is to move the line of integration and use residue theorem. We wish to move beyond the "wall of dense poles" on $\Re(s)=0$. To do this, we change the order of summation and integration. Using \eqref{phidef}, for small $\delta>0$,
\begin{align*}
&\frac{1}{2\pi i}\int_{5- i\infty}^{5+ i\infty} \varphi(s) X^s F(s)ds
=\frac{1}{2\pi i}\int_{\delta- i\infty}^{\delta+ i\infty} \varphi(s) X^s F(s)ds\\
=&\frac{1}{2\pi i}\int_{\delta- i\infty}^{\delta+ i\infty} \left(\frac{1}{8}\sum_v \Psi(s,v)-Z(s)\right) X^s F(s)ds\\
=&\frac{1}{8}\sum_v \frac{1}{2\pi i}\int_{\delta- i\infty}^{\delta+ i\infty} \Psi(s,v) X^s F(s)ds - \frac{1}{2\pi i}\int_{\delta- i\infty}^{\delta+ i\infty} Z(s) X^s F(s)ds\\
=&I_1-I_2
\end{align*}
We continue by using Proposition \ref{prop21} to expand $\Psi(s,v)$ in $I_1$:
\begin{align}
I_1&=\frac{1}{12R}\sum_v\sum_\mu \frac{1}{2\pi i}\int_{\delta- i\infty}^{\delta+ i\infty} \Gamma(s)^{-1} L\left(\frac{s}{3},\lambda_{\mu,v}\right)\prod_{j=1}^3 \Gamma\left(\frac{s}{3}-2\pi i\mu_j\right) X^s F(s)ds \label{integrand}
\end{align}
Now for each integral, the number of poles on the imaginary axis is at most $3$. Thus we can move the line of integration from $\Re(s)=\delta$ to $\Re(s)=-\delta$. Thus
\begin{align}
I_1&=\frac{1}{12R}\sum_{\mu,v}\left( \sum_{j=1}^3\text{Res}_{\mu,v,j}\right.\\
&+\left.\frac{1}{2\pi i}\int_{-\delta- i\infty}^{-\delta+ i\infty} \Gamma(s)^{-1} L\left(\frac{s}{3},\lambda_{\mu,v}\right)\prod_{j=1}^3 \Gamma\left(\frac{s}{3}-2\pi i\mu_j\right) X^s F(s)ds\right)\notag\\
&=\frac{1}{8}\sum_v \frac{1}{2\pi i}\int_{-\delta- i\infty}^{-\delta+ i\infty} \Psi_0(s,v) X^s F(s)ds+\frac{1}{12R}\sum_{\mu,v,j}\text{Res}_{\mu,v,j}\notag\\\notag
&=\frac{1}{2\pi i}\int_{-\delta- i\infty}^{-\delta+ i\infty} \varphi_0(s) X^s F(s)ds+\frac{1}{2\pi i}\int_{-\delta- i\infty}^{-\delta+ i\infty} Z(s) X^s F(s)ds\\
&+\frac{1}{12R}\sum_{\mu,v,j}\text{Res}_{\mu,v,j}\notag\\
&=I_3+I_4+\frac{1}{12R}\sum_{\mu,v,j}\text{Res}_{\mu,v,j}\label{ressum}
\end{align}
where $\text{Res}_{\mu,v,j}$ is the residue of $\Gamma(s)^{-1} L\left(\frac{s}{3},\lambda_{\mu,v}\right)\prod_{\ell=1}^3 \Gamma\left(\frac{s}{3}-2\pi i\mu_\ell\right) X^s F(s)$ at $s=6\pi i\mu_j$, if such a pole exist.

The integrals $I_2, I_4$ are dealt with more easily. By using the fact that $Z(0)=0$, $Z(s) X^s s^{-k-1}$ has a pole of order at most $k$ at $s=0$, we have $I_4-I_2=O\left((\log X)^{k-1}\right)$. Also note that, by Proposition \ref{prop22}, for all $k\ge 3$, $F(s)=(k!)s^{-k-1}$ is able to cancel the growth of $\varphi_0(s)$ in a vertical line so that $I_3$ converges absolutely. This means $I_3=O(X^{-\delta})$.

Before we analyze the residues, we first need to prove that
\begin{lemma}
Let $K$ be a totally real cubic field, and $\mu=(\mu_1,\mu_2)\in \Lambda_U^*$. Let $\mu_3=-\mu_1-\mu_2$. Suppose $\mu_i=\mu_j$ for some $i\ne j$, then $\mu=0$.
\end{lemma}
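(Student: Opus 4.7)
The plan is to translate the equality $\mu_i=\mu_j$ into a multiplicative relation among $u_1,u_2$ that violates their $\ZZ$-linear independence in $U/\{\pm 1\}$.

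First I would rewrite the dual-lattice condition symmetrically in the three embeddings. Since $\Lambda_U\subset\RR^2$ is spanned by $\lambda_l=(\log|u_l^{(1)}|-\log|u_l^{(3)}|,\ \log|u_l^{(2)}|-\log|u_l^{(3)}|)$ for $l=1,2$, membership $\mu\in\Lambda_U^*$ amounts to $\mu\cdot\lambda_l\in\ZZ$. Setting $\mu_3=-\mu_1-\mu_2$ and invoking $\log|u_l^{(1)}|+\log|u_l^{(2)}|+\log|u_l^{(3)}|=0$ (since $|N(u_l)|=1$), this recasts as
\[
\sum_{j=1}^3 \mu_j\log|u_l^{(j)}|\in\ZZ\qquad(l=1,2).
\]
The virtue of this reformulation is that both the condition and the norm-one relation are now symmetric in the three indices, so the three cases of the hypothesis can be handled uniformly.

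Now suppose $\mu_i=\mu_j$ with $i\ne j$, and let $k$ denote the remaining index. From $\mu_1+\mu_2+\mu_3=0$ one obtains $\mu_k=-2\mu_i$, and substituting into the symmetric condition (together with $\log|u_l^{(i)}|+\log|u_l^{(j)}|=-\log|u_l^{(k)}|$) collapses it to
\[
3\mu_i\log|u_l^{(k)}|\in\ZZ\qquad(l=1,2).
\]
If $\mu_i=0$ there is nothing more to show. Otherwise $\log|u_l^{(k)}|\in\tfrac{1}{3\mu_i}\ZZ$ for $l=1,2$. Since $u_1,u_2$ descend to a $\ZZ$-basis of $U/\{\pm 1\}\cong\ZZ^2$, each $u_l$ has infinite order; combined with the injectivity of $\sigma_k\colon K\to\RR$ and the fact that in a totally real field $\sigma_k(u)=\pm 1$ forces $u=\pm 1$, this implies $\log|u_l^{(k)}|\ne 0$. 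Write $\log|u_l^{(k)}|=p_l/(3\mu_i)$ with nonzero $p_l\in\ZZ$.

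Then $p_2\log|u_1^{(k)}|=p_1\log|u_2^{(k)}|$ gives $|\sigma_k(u_1^{p_2}u_2^{-p_1})|=1$, hence $\sigma_k(u_1^{p_2}u_2^{-p_1})=\pm 1$, and injectivity of $\sigma_k$ forces $u_1^{p_2}u_2^{-p_1}=\pm 1$ in $K$. This is a nontrivial $\ZZ$-linear relation among the basis $u_1,u_2$ of $U/\{\pm 1\}$, contradicting Dirichlet's unit theorem. Therefore $\mu_i=0$, and then $\mu_j=\mu_i=0$ and $\mu_k=-2\mu_i=0$, so $\mu=0$. There is no serious analytic or arithmetic obstacle; the only point requiring care is the symmetrization of the dual-lattice condition, which is what lets the three possibilities for $(i,j)$ be dispatched by a single argument.
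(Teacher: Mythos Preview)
Your proof is correct and follows essentially the same approach as the paper: translate $\mu_i=\mu_j$ into a relation $|\sigma_k(u_1^{p_2}u_2^{-p_1})|=1$, use that real conjugates of absolute value $1$ are $\pm 1$, and contradict the independence of $u_1,u_2$ in $U/\{\pm 1\}$. Your symmetric rewriting $\sum_j \mu_j\log|u_l^{(j)}|\in\ZZ$ (which the paper leaves implicit, since its quantity $\log|u_l^{(1)}u_l^{(2)}/(u_l^{(3)})^2|$ equals $-3\log|u_l^{(3)}|$ by the norm relation) lets you handle all three cases $(i,j)$ at once rather than doing $\mu_1=\mu_2$ and then invoking symmetry, but the underlying argument is identical.
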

\begin{proof}
If $\mu_1=\mu_2$, then since $\langle \mu,\lambda_1\rangle,\langle \mu,\lambda_2\rangle \in \ZZ$, we have
\begin{align*}
\mu_1 \left(\log \left| \frac{u_1^{(1)}u_1^{(2)}}{(u_1^{(3)})^2} \right|\right)=a\\
\mu_1 \left(\log \left| \frac{u_2^{(1)}u_2^{(2)}}{(u_2^{(3)})^2} \right|\right)=b
\end{align*}
for some integers $a,b$. If $\mu_1\ne 0$, then
\begin{equation*}
\log \left| \frac{u_1^{(1)}u_1^{(2)}}{(u_1^{(3)})^2} \right|^b \left| \frac{u_2^{(1)}u_2^{(2)}}{(u_2^{(3)})^2} \right|^{-a} =0
\end{equation*}
Let $u=u_1^b u_2^{-a}$, then $u\in O_K^\times$, and
\begin{equation*}
\left| \frac{u^{(1)}u^{(2)}}{(u^{(3)})^2}\right|=1
\end{equation*}
Using the fact that $\left|N(u)\right|=1$, we have $\left|u^{(3)}\right|=1$, so $a=b=0$. But that implies
\begin{equation*}
\left| \frac{u_j^{(1)}u_j^{(2)}}{(u_j^{(3)})^2}\right|=1
\end{equation*}
for $j=1,2$, so $\left|u_j^{(3)}\right|=1$, which contradicts with the fact that $u_1,u_2$ generate $U$ with $-1$.

The same argument applies if $\mu_1=\mu_3$ or $\mu_2=\mu_3$.
\end{proof}

We now return to the integrand of \eqref{integrand}. Except the pole at $s=0$, we only have at most simple poles at
\[
s=6\pi i\mu_j,j=1,2\text{ or }3
\]
and the residues are (if there is a pole)
\begin{align*}
&\text{Res}_{\mu,v,j}=\Gamma(6\pi i\mu_j)^{-1} L(2\pi i\mu_j,\lambda_{p,\mu})\prod_{i\ne j}\Gamma(2\pi i(\mu_j-\mu_i))X^{6\pi i\mu_j} (1+6\pi i\mu_j)^{-k-1} k!\\
&\ll \exp(3\pi^2|\mu_j|)|\mu_j|^{1/2}\left(\prod_{i\ne j}|\mu_j-\mu_i|^{1/2+\epsilon}\right)\left(\prod_{i\ne j}\exp(-\pi^2 |\mu_j-\mu_i|)|\mu_j-\mu_i|^{-1/2}\right) \\
& \cdot |\mu_j|^{-k-1}\\
&=\exp(\pi^2(3|\mu_j|-|\mu_j-\mu_{i_1}|-|\mu_j-\mu_{i_2}|)\left(\prod_{i\ne j}|\mu_j-\mu_i|^{\epsilon}\right)|\mu_j|^{-k-1/2}
\end{align*}
by Stirling's formula and convexity bound of $L$-function. Note that $\mu_1+\mu_2+\mu_3=0$. Let $\nu_1=\mu_j-\mu_{i_1}$, $\nu_2=\mu_j-\mu_{i_2}$, then $3\mu_j=\nu_1+\nu_2$, and the bound above can be rewritten as
\[
\exp\left(-\pi^2\left(|\nu_1|+|\nu_2|-|\nu_1+\nu_2|\right)\right)|\nu_1|^\epsilon |\nu_2|^\epsilon |\nu_1+\nu_2|^{-k-1/2}.
\]
We wish to study the convergence of this bound summed over an appropriate lattice of $\nu$ in $\RR^2$. 

As $\mu$ runs over the lattice $\Lambda_U^*$, $\nu=(\nu_1,\nu_2)$ also runs over a lattice $\Lambda '$ in $\RR^2$. We will prove 
\begin{prop}\label{prop32}
The sum
\[
\sum_{0\ne\nu\in \Lambda '}\exp\left(-\pi^2\left(|\nu_1|+|\nu_2|-|\nu_1+\nu_2|\right)\right)|\nu_1|^\epsilon |\nu_2|^\epsilon|\nu_1+\nu_2|^{-k-1/2}
\]
is convergent. 
\end{prop}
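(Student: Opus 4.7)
The plan is to decompose $\Lambda' \setminus \{0\}$ by the signs of $\nu_1$ and $\nu_2$. The key observation is that $|\nu_1| + |\nu_2| - |\nu_1 + \nu_2|$ vanishes when $\nu_1, \nu_2$ have the same sign and equals $2\min(|\nu_1|, |\nu_2|)$ when they have opposite signs. In the same-sign region the exponential factor is trivial, but $|\nu_1|, |\nu_2| \le |\nu_1 + \nu_2|$, so the summand is bounded by $|\nu_1 + \nu_2|^{2\epsilon - k - 1/2}$. Grouping lattice points by the value of $\nu_1 + \nu_2$, there are $O(R)$ points of $\Lambda'$ with $|\nu_1 + \nu_2| \in [R, R+1]$, and the partial sum reduces to $\sum_R R^{2\epsilon - k + 1/2}$, convergent for $k \ge 3$ and $\epsilon$ small.

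In the opposite-sign region the exponential factor $\exp(-2\pi^2 \min(|\nu_1|, |\nu_2|))$ supplies strong decay. I split this further into an \emph{unbalanced} regime $\max(|\nu_1|, |\nu_2|) \ge 2\min(|\nu_1|, |\nu_2|)$ and a \emph{balanced} regime with $\max \le 2\min$. In the unbalanced regime, $|\nu_1 + \nu_2| = \bigl||\nu_1| - |\nu_2|\bigr| \ge \max/2$, so the summand is bounded by $\exp(-2\pi^2 \min) \cdot \max^{2\epsilon - k - 1/2}$, and the total is finite because the exponential sum in $\min$ and the polynomial sum in $\max$ both converge.

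The main obstacle is the balanced opposite-sign regime, where $|\nu_1 + \nu_2| = 3|\mu_j|$ can be much smaller than $\max(|\nu_1|, |\nu_2|)$, and only Baker's theorem controls it. The cases $\mu_j = 0$ correspond to poles that merge with $s = 0$ and are absorbed into the main residue computation, so I may assume $\mu_j \ne 0$. Writing $\mu = m_1 \lambda_1^* + m_2 \lambda_2^*$ with $(m_1, m_2) \in \ZZ^2$, the quantity $R\mu_j$ is a nonzero $\ZZ$-linear form in the logarithms $\log|u_l^{(i)}/u_l^{(3)}|$ (with $l, i \in \{1, 2\}$), which are logarithms of nontrivial algebraic numbers in $K$, with integer coefficients of absolute value at most $H := \max(|m_1|, |m_2|)$. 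Baker's theorem on linear forms in logarithms then produces effective constants $C_1, C_2 > 0$ depending only on $K$ with
\begin{equation*}
|\nu_1 + \nu_2| = 3|\mu_j| \ge C_1 H^{-C_2}.
\end{equation*}
Since $(m_1, m_2) \mapsto (\nu_1, \nu_2)$ is a linear isomorphism onto its image in $\Lambda'$, we have $H \asymp \max(|\nu_1|, |\nu_2|)$, hence $|\nu_1 + \nu_2|^{-k-1/2} \ll \max(|\nu_1|, |\nu_2|)^{C_2(k+1/2)}$. In the balanced regime $\max \asymp \min$, so $\exp(-2\pi^2 \min)$ defeats this polynomial growth and the corresponding subsum converges absolutely. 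Adding the three contributions completes the proof.
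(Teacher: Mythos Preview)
Your proof is correct and follows essentially the same strategy as the paper: split by the sign of $\nu_1\nu_2$, handle the same-sign region by the bound $|\nu_1|,|\nu_2|\le|\nu_1+\nu_2|$ together with a strip-by-strip lattice point count, and in the opposite-sign region isolate the subregion where $|\nu_1+\nu_2|$ can be small and invoke Baker's theorem there, letting the exponential factor $\exp(-2\pi^2\min)$ absorb the resulting polynomial loss.

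The only difference is cosmetic. In the opposite-sign case the paper tiles by unit boxes $Q_{M,N}=\{M\le|\nu_2|<M+1,\ N\le|\nu_1+\nu_2|<N+1\}$ and applies Baker precisely when $N=0$, whereas you use a balanced/unbalanced dichotomy ($\max\le 2\min$ versus $\max\ge 2\min$) and apply Baker in the balanced part. These isolate essentially the same dangerous set. Two small remarks: the constant in front of $\mu_j$ that turns it into an integer linear form in the $\log|u_l^{(i)}/u_l^{(3)}|$ is $\det(\lambda_1,\lambda_2)=3R$ rather than $R$ (harmless for the argument); and your justification ``$\mu_j=0$ is absorbed into the $s=0$ residue'' is extrinsic to the proposition as stated---in fact one can show directly (by the same device as Lemma~3.1) that $\mu_j=0$ forces $\mu=0$, so no nonzero $\nu\in\Lambda'$ has $\nu_1+\nu_2=0$ and the sum is well-defined without reference to the residue picture.
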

\begin{proof}
When $\nu_1 \nu_2>0$, the exponential term is $1$, and we may assume without loss of generality that $\nu_1,\nu_2>0$. We group the points $\nu$ by the regions
\[
P_N=\{\nu_1,\nu_2>0 : N\le \nu_1+\nu_2< N+1\}.
\]
Note that in $P_0$, $|\nu_1+\nu_2|$ is bounded below by a constant $C_0=C_0(K)$ which only depends on the field $K$. In $P_N$ with $N\ge 1$, $|\nu_1+\nu_2|$ is bounded below by $N$. Recall that the number of lattice points in a convex compact set with diameter $d$ is bounded roughly by $d^2$. Since we can divide each strip $P_N$ into $2N+1$ unit triangles with diameter $\sqrt{2}$, we have $\text{card}(\Lambda '\cap P_N)\ll N$ (By \cite{ash2007}, Lemma 7.1). Thus
\[
\sum_{0\ne\nu\in \Lambda ',\nu_1\nu_2>0}|\nu_1+\nu_2|^{-k-1/2+2\epsilon}\ll \sum_{N\ge 1}N^{-k+1/2+2\epsilon}<\zeta\left(k-\frac{1}{2}-2\epsilon\right)<\infty
\]
since $k\ge 3$.

Now we look at the part of the sum where $\nu_1\nu_2<0$. Here the exponential term plays an important part. Again without loss of generality, assume $\nu_1>0$, $\nu_2<0$ and $\nu_1+\nu_2>0$. In this case,
\[
|\nu_1|+|\nu_2|-|\nu_1+\nu_2|=\nu_1-\nu_2-(\nu_1+\nu_2)=-2\nu_2=2|\nu_2|.
\]
Consider the points in the regions
\[
Q_{M,N}=\{\nu_1>0,\nu_2<0,\nu_1+\nu_2>0 : N\le |\nu_1+\nu_2|< N+1, M\le |\nu_2|<M+1\}.
\]
Each region has bounded diameter, so the number of lattice points inside it is bounded by a constant. For $N\ge 1$, $|\nu_1+\nu_2|$ is bounded from below by $N$. Thus the sum in $Q_{M,N}$ is bounded from above by a constant times
\[
\exp(-2M)(2+M+N)^\epsilon (M+1)^\epsilon N^{-k-1/2}.
\]
Summing this over the regions $Q_{M,N}$ with $M\ge 0$ and $N\ge 1$, the sum is bounded by
\[
\sum_{M\ge 0,N\ge 1}\exp(-2M) (2+M+N)^\epsilon (M+1)^\epsilon N^{-k-1/2}<\infty.
\]
Finally, consider the case $N=0$. We wish to prove that in $Q_{M,0}$, the quantity $|\nu_1+\nu_2|$ is bounded from below by some function of $M$. Fix a basis $b_1,b_2$ of $\Lambda'$. Let $Q_M$ be the parallelogram defined by
\[
Q_M=\{a_1 b_1+a_2 b_2 : |a_i|\le M\}.
\]
Note that there exists a constant $C_1=C_1(K,b)$ depending only on the field $K$ and the basis $b_1,b_2$, such that $Q_{M,0}$ is inside the dilation $C_1\cdot Q_M$. Thus $|\nu_1+\nu_2|$ is a linear combination of the entries of $b_1$ and $b_2$ with coefficients bounded by $C_1 M$. Note that the entries come from the inverse of the regulator matrix of $K$ (possibly after some linear combinations), which consists of logarithm of the units, up to a constant only dependent on $K$. Now we use the following theorem by Baker \cite{baker1967}:
\begin{thm}[\cite{baker1967}, Theorem 3]
Let $n\ge 2$ be an integer, and let $\alpha_1,\ldots, \alpha_n$ denote non-zero algebraic numbers such that $\log \alpha_1, ..., \log \alpha_n$ are linearly independent over the rationals. Further suppose $\kappa>2n+1$, and let d be any positive integer. There is an effectively computable number
\[
C = C(n, \alpha_1, \ldots, \alpha_n,\kappa,d) > 0
\]
such that for all algebraic numbers $\beta_1,\ldots,$ $\beta_n$, not all $0$, with degree at most $d$, we have
\[
|\beta_1\log\alpha_1+\ldots+\beta_n\log\alpha_n|>CH^{-\kappa}
\]
where $H$ is the maximum of the heights of the $\beta_i$'s.
\end{thm}
Baker's theorem shows that $|\nu_1+\nu_2|\gg M^{-\kappa}$. Hence the sum over all $Q_{M,0}$ are bounded from above by
\[
\sum_{M\ge 1} \exp(-2M) (M+3)^{2\epsilon} M^{\kappa(k+1/2)}<\infty.
\]
This concludes the proof of Proposition \ref{prop32}.
\end{proof}

Putting the above together, we have proved that the sum $\sum_{\mu\ne 0}\text{Res}_{\mu,v,j}$ over residues in \eqref{ressum} is convergent for any $v$ and $j$.

Now we turn to $\mu=0$, which possibly contributes a higher order pole at $s=0$ because of the gamma factors. Isolating that part and replacing $\frac{s}{3}$ with $s$, we look at
\begin{align*}
J_{\mu,v}(s)=L(s,\lambda_{\mu,v})\prod_{j=1}^3 \Gamma(s-2\pi i \mu_j)
\end{align*}
at $s=0$. Note that the gamma factor for the functional equation of $L(s,\lambda_{\mu,v})$ is
\begin{align*}
\Gamma(s,\lambda_{\mu,v})=(\pi^{-3} D)^{s/2}\prod_{j=1}^3 \Gamma\left(\frac{s}{2}+\frac{e_j}{2}-\pi i \mu_j\right).
\end{align*}
Let
\begin{align*}
L^*(s,\lambda_{\mu,v})=L(s,\lambda_{\mu,v})\Gamma(s,\lambda_{\mu,v})
\end{align*}
be the completed partial Hecke $L$-function. Using the duplication formula we find that
\begin{align*}
J_{\mu,v}(s)=L^*(s,\lambda_{\mu,v})(\pi^{-3} D)^{s/2} \left(\frac{2^{s-1}}{\sqrt{\pi}}\right)^3 \prod_{j=1}^3 \Gamma\left(\frac{s}{2}+\frac{1-e_j}{2}-\pi i \mu_j\right)
\end{align*}
When $(\mu,v)=(0,v_0)$, the function $L^*(s,\lambda_{0,v_0})$ has a simple pole at $s=0$. In that case $J_{0,v_0}(s)$ has only a simple pole at $s=0$. The higher order terms come from the gamma factors when all $\mu_i$'s are zero, and some of the corresponding $e_i$'s are $1$.

For further convenience, let
\[
L(s,v)=L(s,\lambda_{0,v}), L^*(s,v)=L^*(s,\lambda_{0,v})
\]
We shall show
\begin{prop}
Let $K$ be a cubic, totally real field with discriminant $D$. When $v\ne \text{id}$ and $(0,v)$ is good, $J_{0,v}(s)$ has a double pole at $s=0$ with leading Laurent coefficient
\begin{equation*}
\frac{\sqrt{D}}{2\pi^2} L(1,v) \frac{1}{s^2}.
\end{equation*}
All other poles of $J_{0,v}(s)$ on the line $\Re (s)=0$ are simple.
\end{prop}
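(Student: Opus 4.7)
The plan is to pin down the signature $(e_1, e_2, e_3)$ via the good-pair condition, then combine the duplication-formula expression for $J_{0,v}(s)$ with the functional equation of the completed partial Hecke $L$-function to extract the Laurent coefficient at $s = 0$.

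First I would observe that goodness of $(0, v)$ reduces to $v(u) = 1$ for every unit $u \in O_K^\times$. Applied to $u = -1$ this forces $(-1)^{e_1 + e_2 + e_3} = 1$, so $e_1 + e_2 + e_3$ is even. Since $v \ne v_0$ the sum is positive, hence equals $2$: exactly two of the $e_j$ are $1$ and one is $0$. Substituting into the duplication-formula representation, the product $\prod_{j=1}^3 \Gamma\bigl(\tfrac{s}{2} + \tfrac{1-e_j}{2}\bigr)$ collapses to $\Gamma(s/2)^2 \Gamma((s+1)/2)$, which near $s = 0$ has leading term $(2/s)^2 \sqrt\pi = 4\sqrt\pi/s^2$, producing a double pole. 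The remaining factors are holomorphic at $s = 0$: $L^*(s, v)$ is entire for $v \ne v_0$ (its only pole among the $\Psi$-expansion data is at $(\mu,v)=(0,v_0)$), $(\pi^{-3} D)^{s/2} \to 1$, and $(2^{s-1}/\sqrt\pi)^3 \to 1/(8\pi^{3/2})$. Multiplying yields leading Laurent coefficient $L^*(0, v)/(2\pi\, s^2)$.

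To evaluate $L^*(0, v)$, I would apply the functional equation $L^*(s, v) = W(v) L^*(1 - s, v)$, legitimate because $v$ is real-valued so $\bar v = v$. This gives $L^*(0, v) = W(v) L^*(1, v)$, and at $s = 1$ the gamma factor collapses to $(\pi^{-3} D)^{1/2} \Gamma(1/2) \Gamma(1)^2 = \sqrt D/\pi$, yielding $L^*(1, v) = \sqrt D\, L(1, v)/\pi$. The root number $W(v) = 1$ for these unramified sign characters of a totally real cubic field, producing the claimed coefficient $\sqrt D\, L(1, v)/(2\pi^2)$ as the $1/s^2$ term.

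For the final assertion about other poles on $\Re(s) = 0$: the gamma factors $\Gamma(s/2)^2 \Gamma((s+1)/2)$ have poles only at non-positive real numbers, and $L^*(s, v)$ is holomorphic on the imaginary axis, so $s = 0$ is the only pole of $J_{0, v}(s)$ on that line; the claim about other poles being simple is therefore vacuous. The most delicate step of the argument is the verification $W(v) = 1$; one natural route is to decompose the partial Hecke $L$-function along the narrow ideal class group, invoke the standard functional equation for each twisted full Hecke $L$-function, and check that the archimedean root-number contributions combine with the discriminant normalization to yield $+1$.
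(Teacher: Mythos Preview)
Your approach is essentially the paper's: determine that exactly two of the $e_j$ equal $1$, plug into the duplication-formula expression for $J_{0,v}(s)$, read off the double pole from $\Gamma(s/2)^2$, and evaluate $L^*(0,v)$ via the functional equation as $\sqrt{D}\,L(1,v)/\pi$. Two small points of divergence are worth noting.

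First, the paper simply writes $L^*(0,v)=L^*(1,v)$ without isolating a root number; your explicit flagging of $W(v)=1$ as the delicate step is more scrupulous than the paper, which treats this identity as given. Second, your reading of the final clause as vacuous---since $J_{0,v}$ itself has no other poles on $\Re(s)=0$---is literally correct for the proposition as stated, but the paper's proof shows the intended content is actually about the poles of $J_{\mu,v}(s)$ for $\mu\ne 0$ on the imaginary axis. There the paper invokes Lemma~3.1 (that $\mu_i\ne\mu_j$ for $i\ne j$ whenever $\mu\ne 0$) to rule out coincidences among the gamma-factor poles and conclude simplicity. So the proposition's wording is slightly misaligned with its proof; your interpretation is faithful to the statement, the paper's proof is faithful to what is needed downstream.
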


\begin{proof}
In the cubic case, for any good pair $(0,v)$ with $v\ne v_0$, $L(1,v)$ is a nonzero finite number, so is $L^*(1,v)=L^*(0,v)$, by the functional equation. Also note that two of the $e_i$'s are $1$, and the other one is $0$. Thus
\begin{align*}
J_{\mu,v}(s)=L^*(s,\lambda_{\mu,v})(\pi^{-3} D)^{s/2} \left(\frac{2^{s-1}}{\sqrt{\pi}}\right)^3 \prod_{j=1}^3 \Gamma\left(\frac{s}{2}+\frac{1-e_j}{2}-\pi i \mu_j\right)
\end{align*}
has a double pole at $s=0$, and the Laurent expansion can be explicitly calculated by the following expression:
\begin{align*}
J_{0,v}(s)&=\frac{1}{8 \pi}L^*(s,v)\Gamma\left(\frac{s}{2}\right)^{2}\\
&=\frac{1}{8 \pi}L^*(s,v)\left( \frac{4}{s^2}-\frac{4\gamma}{s}+\ldots \right).
\end{align*}
Use the functional equation of $L^*$ to rewrite
\begin{align*}
L^*(0,v)&=L^*(1,v)=\frac{\sqrt{D}}{\pi}L(1,v).
\end{align*}
We conclude that the Laurent series of $J_{0,v}(s)$ has the form
\begin{equation*}
\frac{\sqrt{D}}{2\pi^2} L(1,v) \frac{1}{s^2}+O\left(\frac{1}{s}\right)\text{ as }s\rightarrow 0.
\end{equation*}

Also note that for $\mu\ne 0$, $J_{p,\mu}(s)$ have poles at $s={e_j-1}+2\pi i \mu_j$. When $e_j=1$, they will lie on the line $\Re (s)=0$. To show that such poles are simple, we need to prove that $\mu_i\ne \mu_j$ for $i\ne j$. This is exactly the statement of Lemma 3.1.
\end{proof}


Now we know exactly how the residues on $s=0$ will contribute. If there exists $v$ so that $(0,v)$ is good, then the integrand
\begin{align}\label{mainintegrand}
\frac{1}{12R}\frac{1}{\Gamma(s)} L\left(\frac{s}{3},v\right) \Gamma\left(\frac{s}{3}\right)^3 X^s (k!)s^{-k-1}
\end{align}
has a pole of order $k+2$. When we move the line of integration, its residue will contribute
\[
\frac{3\sqrt{D}}{8\pi^2 (k+1)R} L(1,v) (\log X)^{k+1}+O\left((\log X)^{k}\right)
\]

Now we may conclude that
\begin{thm}
For any integer $k\ge 3$,
\begin{align*}
\sum_{n\le X} E_n \log^k\left(\frac{X}{n}\right)&=\frac{3\sqrt{D}}{8\pi^2 (k+1)R} \left(\sum_{(0,v\ne v_0)\text{ good}} L(1,v)\right) (\log X)^{k+1}\\
&+O\left((\log X)^{k}\right)
\end{align*}
\end{thm}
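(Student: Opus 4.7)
My plan is to combine the pieces that Sections 2 and 3 have already put in place. The quantity $S(X) = \sum_{n \le X} E_n \log^k(X/n)$ equals $\sum_{n \ge 1} E_n f(n/X)$ for $f(t)=(-\log t)^k \mathbf{1}_{[0,1]}$, whose Mellin transform is $F(s) = k!/s^{k+1}$, so that
\[
S(X) = \frac{1}{2\pi i}\int_{(5)} \varphi(s) X^s F(s)\,ds.
\]
Since $\varphi$ is holomorphic on $\Re(s)>0$ (the simple poles of $2^{-n}\sum_v \Psi(s,v)$ and of $Z(s)$ at $s=3$ cancel by Proposition \ref{prop21}), I first shift the contour to $\Re(s)=\delta$. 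Using Proposition \ref{prop21} again I expand each $\Psi(s,v)$ as a series over $\mu$ and move each $(\mu,v)$-integral individually to $\Re(s)=-\delta$, picking up the residues $\mathrm{Res}_{\mu,v,j}$. This yields the decomposition already set up,
\[
S(X) = I_3 + (I_4 - I_2) + \frac{1}{12R}\sum_{\mu,v,j}\mathrm{Res}_{\mu,v,j}.
\]

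Next I would dispatch the three error contributions. The polynomial bound on $\varphi_0$ of Proposition \ref{prop22}, together with the decay of $F(s) = k!/s^{k+1}$ (valid for $k \ge 3$), makes $I_3$ absolutely convergent, and the factor $|X^s| = X^{-\delta}$ gives $I_3 = O(X^{-\delta})$. Because $\zeta(-2) = 0$, the function $Z(s)X^s s^{-k-1}$ has a pole of order at most $k$ at the origin, so $I_4 - I_2 = O((\log X)^{k-1})$. For the residue sum over $\mu \ne 0$, each term has $|X^{6\pi i \mu_j}| = 1$, and the Stirling–convexity bound derived just above Proposition \ref{prop32}, combined with that proposition itself, shows the series converges absolutely and contributes only $O(1)$.

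The main term comes from the $\mu = 0$ residues. For each $v \ne v_0$ with $(0,v)$ good the integrand is $\tfrac{k!}{12R}\,\Gamma(s)^{-1} L(s/3,v)\Gamma(s/3)^3 X^s s^{-k-1}$. By the proposition immediately preceding the theorem, $J_{0,v}(s/3) = L(s/3,v)\Gamma(s/3)^3$ has a double (not triple) pole at $s = 0$ with leading Laurent coefficient $\tfrac{9\sqrt{D}}{2\pi^2} L(1,v)\,s^{-2}$, the factor $9$ coming from the substitution $s \mapsto s/3$. Since $\Gamma(s)^{-1}$ has a simple zero at $0$, the integrand has a pole of exact order $k+2$ there. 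Reading off the coefficient of $s^{k+1}$ in the Taylor expansion of $X^s = e^{s\log X}$ and multiplying by the leading Laurent coefficient yields
\[
\frac{1}{12R}\cdot\frac{9\sqrt{D}}{2\pi^2}L(1,v)\cdot\frac{1}{k+1}(\log X)^{k+1} = \frac{3\sqrt{D}}{8\pi^2(k+1)R}L(1,v)(\log X)^{k+1},
\]
with all lower-order contributions absorbed into $O((\log X)^k)$. Summing over good $(0,v)$ with $v \ne v_0$ and recombining with the three error bounds above gives the theorem.

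The genuine obstacle in the whole argument — already surmounted in Proposition \ref{prop32} — is the absolute convergence of the $\mu \ne 0$ residue sum; once it is available the final assembly is standard residue calculus. The cubic hypothesis is essential precisely here, since Baker's theorem is then applied to linear forms in only two logarithms of fundamental units, preserving the lower bound $|\nu_1+\nu_2| \gg M^{-\kappa}$ used in the $Q_{M,0}$ analysis. Any increase in the rank of $O_K^\times$ would force a form with more logarithms and invalidate this step.
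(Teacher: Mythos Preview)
Your proposal is correct and follows essentially the same route as the paper: shift to $\Re(s)=\delta$, expand each $\Psi(s,v)$ termwise, push each summand to $\Re(s)=-\delta$, and collect $I_3$, $I_4-I_2$, and the residues, invoking Proposition~\ref{prop22} and Proposition~\ref{prop32} exactly as the paper does. One small omission worth closing: you treat only the $\mu=0$ residues with $v\ne v_0$, but the pair $(0,v_0)$ is always good and must be accounted for; the paper notes that $J_{0,v_0}(s)$ has only a \emph{simple} pole at $s=0$ (since $L^*(s,\lambda_{0,v_0})$ has a simple pole there while all three factors $\Gamma((s+1)/2)$ are regular), so after the zero of $\Gamma(s)^{-1}$ the integrand has a pole of order $k+1$ and contributes only $O((\log X)^k)$.
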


\begin{rmk}
For a nontrivial $(0,v)$ to be good, we need to be able to find a product of two real embeddings $\sigma_i, \sigma_j$ , such that $\sigma_i(u) \sigma_j(u)>0$ for all $u\in O_K^\times$. For totally real cubic fields with discriminant less than $1000$, there are only $4$ fields with exactly one nontrivial good $(0,v)$, whose discriminants are $257,697,788,985$. For other fields, the sum over good nontrivial $(0,v)$ is empty.
\end{rmk}

\section{A Numerical Computation}

In this section, we will provide a numerical example on counting the number of totally positive integers in a specific cubic field, and compare the results with our main theorem. The computations described
here were done in Sage and Julia (to enumerate lattice points) and Magma (to compute the special values of the $L$-functions involved).

Let $K=\QQ(\alpha)=\QQ(x)/(x^3+2x^2-3x-1)$ be the totally real cubic field with discriminant $257$. We fix an integral basis of $O_K$
\[
\beta_1=2+3\alpha, \beta_2=5\alpha+\alpha^2, \beta_3=1+\alpha.
\]
Recall that
\[
\beta_j^{(i)}, i=1,2,3
\]
are the real embeddings of $\beta_j$. Note that $\text{Tr}\beta_1=\text{Tr}\beta_2=0$, $\text{Tr}\beta_3=1$. For any element $z\in O_K$ with $\text{Tr}(z)=a$, we can write
\[
z=c_1\beta_1+c_2\beta_2+a\beta_3,
\]
where $(c_1,c_2)\in \ZZ^2$. Regarding $c_1,c_2$ as the variables, we have that $N_a$, the number of totally positive integers in $O_K$ with trace $a$, is equal to the number of lattice points in the interior of the triangle $T_a$ formed by the lines
\[
c_1\beta_1^{(i)}+c_2\beta_2^{(i)}+a\beta_3^{(i)}=0, i=1,2,3.
\]
One can verify that $T_a=aT_1$, and $r_a=\frac{a^2}{2\sqrt{257}}$ is the area of $T_a$. To get the exact value of $N_a$, one can solve the coordinates of the triangle $T_a$ and do a line sweep with a computer program. 

\begin{figure}[h]
\includegraphics[width=.65\textwidth]{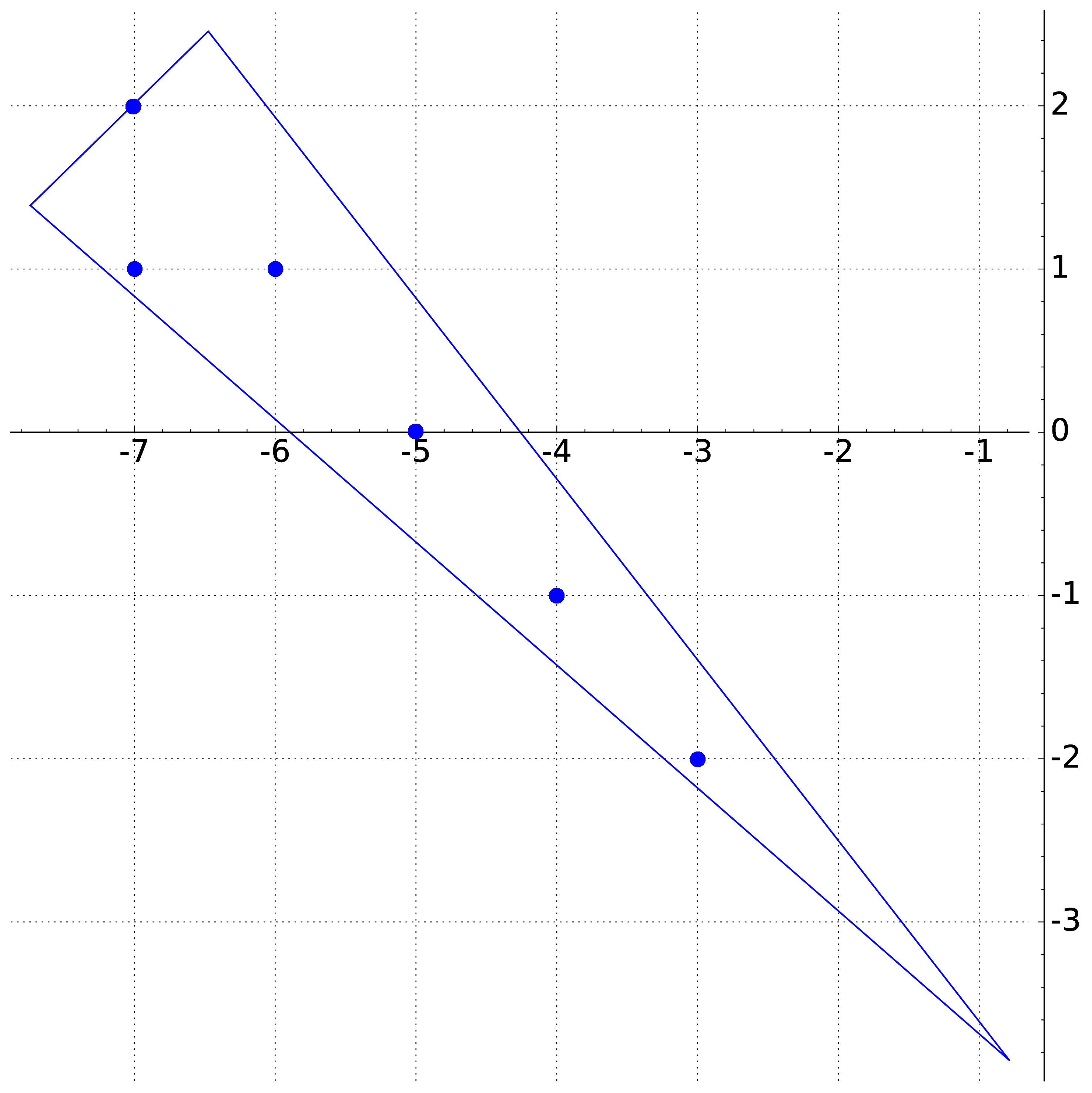}
\caption{Lattice points inside the triangle $T_{15}$}
\end{figure}


Now we turn to the theoretical side. Note that, with more detailed calculations on the Laurent series of the integrand in \eqref{mainintegrand}, one can write out more asymptotic terms explicitly for the sum 
\[
S(X)=\sum_{n\le X} E_n \log^k\left(\frac{X}{n}\right)
\]
\begin{figure}[h]
\includegraphics[width=\textwidth]{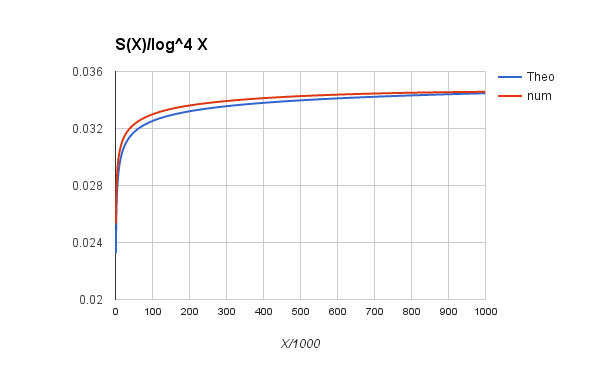}
\caption{Numerical Value of $S(X)/\log^4 X$, up to $X=10^6$}
\end{figure}
in the main theorem, up to $\log X$. For simplicity, We do not include the algebraic expressions of the coefficients here. By numerical computation, the first three terms are:
\[
S(X) \approx 0.041983745 \log^4 X -0.07792862 \log^3 X -0.35634540 \log^2 X+O(\log X).
\]

With the data of $N_a$, we computed $S(X)/\log^4 X$ up to $X=10^6$. It turns out that the quotient is increasing very slowly in this range. Figure 4.2 shows good agreement between numerical data
and the first three terms of the asymptotic above.


\bibliographystyle{plain}
\bibliography{logbib}


\end{document}